\documentclass[10pt]{article} 
\usepackage{amsmath}
\usepackage{mathtools}
\usepackage{graphicx}
\usepackage{subfigure}
\usepackage{amsthm}
\DeclarePairedDelimiter\norm{\lVert}{\rVert}
\title{%
On the perturbation of well separated matrices
}
	\author{Hariprasad M. \\
\small Department of Computational and Data Sciences\\
	Indian Institute of Science, Bangalore, India \\
mhariprasadkansur@gmail.com
}
\setcounter{page}{1}  

 
\newtheorem{theorem}{Theorem}
\newtheorem{lemma}{Lemma}
\newtheorem{corollary}{corollary}
\begin{document}
	\maketitle
	






\begin{abstract}
A matrix is well seperated if all its Gershgorin circles are away from the unit circle and they are seperated from each other. In this article, the region of relative errors in the eigenvalues is obtained as a quadratic oval for non diagonal perturbation of well seperated matrices. Thus giving a computable relative error bound in terms of Gershgorin circle parameters. When the seperation is $\mathcal{O}(n)$ and the matrix is positive definite, an interlacing theorem for the eigenvalues under perturbation is presented. Further when the seperation is $\mathcal{O}(n^2)$, condition number of the eigenvector matrix is upper bouned to obtain the region of perturbed eigenvalue. Numerical results show the relation between diagonal entries and the magnitude of the eigenvector entries even when the matrix is not so well seperated. We exploit this trend in estimating the Perron vector using power method.

\end{abstract}


\section{Introduction}
The relative error in approximating the eigenvalue $\lambda$ of $A$ by $\tilde{\lambda}$ of $B$ is $\frac{|\lambda - \tilde{\lambda}|}{|\lambda|}$.
There are several relative error bounds \cite{eisenstat1998three} for the matrix perturbation. Like the well known Bauer-Fike \cite{chu2003perturbation} type of bound  $$\frac{|\lambda-\tilde{\lambda}|}{|\lambda|} \leq \kappa(X) \norm{A^{-1}(A-B)}.$$ 
Here $\kappa(X) = \norm{X}\norm{X^{-1}}$, is the condition number of eigenvector matrix $X$ of $A$ and $B$ is the perturbed matrix. $\lambda$ and $\tilde{\lambda}$ are the eigenvalues of the matrix $A$ and $B$. However these bounds are difficult to compute, since the eigenvector matrix is not known a priori. In this article,  first we use the knowledge of Gershgorin circles  to obtain the bounds. Then with boundon the separation we present an eigenvalue interlacing theorem for positive definite matrix with the eigenvalues of its perturbation. Using similar argument we obtain condition number estimate for a general well seperated matrix. Further we present the application of these results in numerical methods. \\

\section{On the relative errors :}

For an $n \times n $ square matrix $A$, the circle in the complex plane with center at $A(i,i)$ and radius $\sum_{i = 1}^n |A(i,j)|$ is called the $i$ th Gershgorin circle \cite{weisstein2003gershgorin}. The radius of this circle can be chosen as the minimum of  $\sum_{i = 1}^n |A(i,j)|$ or  $\sum_{j = 1}^n |A(i,j)|$. 
Gershgorin circle theorem : 
When all the Gershgorin circles of a matrix are disjoint, then each circle contains exactly one eigenvalue. When two or more circles overlap, then the union of the region contain that many eigenvalues.\\
Thus Gershgorin circle theorem gives an easily computable bounding region for the eigenvalue in the complex plane. Here analogously using the theorem we try to obtain the region of relative error in perturbing the matrix, but retaining the same diagonal entries.
Consider the matrices $A$ and $B$ which have same diagonal elements and the Gershgorin circles formed by these diagonal entries are well seperated and they lie away from the unit circle in the complex plane.  \\

The expression for relative error involves $\frac{1}{\lambda}$, and it can be related to inversion mapping in geometry (Inversive geometry \cite{morley2013inversive}). \\
Inversion with respect to a circle of radius $R$ having center at origin: The point $p$ in the Eucledian plane is mapped to the point $q$ on the ray joining origin to $p$ such that : the diatance between origin to $p$ ($r_p$) and distance between the origin to $q$ ($r_q$) satisfy the relation ,
$$R^2 = r_qr_p.$$    
In this article the inversion mapping is always with respect to the unit circle ($R=1$).  \\

A circle is mapped into a circle under the inversion ( Chapter 8 on transformations in \cite{kay1969college}). The center of the inverted circle, original circle and the origin lie on a straight line. \\

The complex map $z \to \frac{1}{z}$ is called reciprocation in geometry and it is the reflection with respect to real axis after doing the circle inversion.

Let $ae^{i \alpha}+r_1e^{i \theta}$ be a Gershgorin circle for matrix $A$, where $ae^{i \alpha}$ correspond to the diagonal entry of the matrix and $r_1$ is the absolute sum of non diagonal (row or column) entries. Let this circle contain the eigenvalue $\lambda$.
Then the circle corresponding to $\frac{1}{\lambda}$ is inside the unit circle and has the center 
\begin{align}
\frac{1}{2}\left( \frac{1}{a+r_1} + \frac{1}{a-r_1}\right)e^{-i \alpha} = \frac{a}{(a+r_1)(a-r_1)} e^{-i\alpha}.
\end{align} 
The radius of the circle is given as 
\begin{align}
\frac{1}{2}\left(\frac{1}{a-r_1} - \frac{1}{a+r_1} \right) = \frac{r_1}{(a+r_1)(a-r_1)}.
\end{align}
This can be also seen from the inverse circle mapping.
However the inverted circle in this case is also reflected about the real axis due to the complex conjugation.

Suppose the matrix $A$ is approximated by the matrix $B$ by truncating the diagonals of $A$, the $\tilde{\lambda}$ corresponding to the eigenvalue $\lambda$ lies inside the circle $ae^{i \alpha} + r_2e^{i \eta}$. 
The relative error in this approximation is given by
\begin{align}
\frac{|\lambda - \tilde{\lambda}|}{|\lambda|} = |1-z|.
\end{align}

\begin{theorem}
	The relative error in approximating eigenvalue $\lambda$ with Gershgorin circle of center $a$ and radius $r_1$ (with $a >> r_1>0 $) by a perturbed matrix eigenvalue $\tilde{\lambda}$ with same center and radius $r_2$ is given by a quadratic oval, giving the bound
	$\frac{|\lambda - \tilde{\lambda}|}{|\lambda|} \leq \frac{r_1 + r_2}{a-r_1}$. 
\end{theorem}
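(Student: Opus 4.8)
The plan is to read the relative error as the modulus $|1-z|$ with $z=\tilde{\lambda}/\lambda$, and to separate the argument into two pieces: first identifying the exact region swept out as a quadratic oval, and then extracting the stated bound as the outermost radial extent of that oval. Since both Gershgorin circles share the common centre $ae^{i\alpha}$, the quantity $\frac{|\lambda-\tilde{\lambda}|}{|\lambda|}$ is invariant under multiplying $\lambda$ and $\tilde{\lambda}$ by $e^{-i\alpha}$, so I would first normalise to $\alpha=0$. Writing $\lambda=a+r_1e^{i\theta}$ and $\tilde{\lambda}=a+r_2e^{i\eta}$ on the boundaries of the two circles, the relative error becomes
\[ |1-z| \;=\; \left| \frac{\lambda-\tilde{\lambda}}{\lambda} \right| \;=\; \left| \frac{r_1e^{i\theta}-r_2e^{i\eta}}{a+r_1e^{i\theta}} \right|. \]

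For the oval I would invoke the inversion computation already established: the reciprocal $1/\lambda$ traces the circle of centre $\frac{a}{(a+r_1)(a-r_1)}$ and radius $\frac{r_1}{(a+r_1)(a-r_1)}$. Setting $w=1/\lambda$ and $z=\tilde{\lambda}\,w$, the point $z$ runs over the circle of centre $aw$ and radius $r_2|w|$ for each fixed $w$, so the full set $\{\tilde{\lambda}/\lambda\}$ is the union of this one-parameter family of circles as $w$ traces its own circle $C_1$. Its boundary is the envelope of that family, and eliminating the parameter (after clearing the factor $|\lambda|^2$ coming from the denominator) yields a single degree-two relation between the real and imaginary parts; this is the claimed quadratic oval, and the map $z\mapsto 1-z$ only translates it by $1$. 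I would carry out this parameter elimination explicitly to confirm the quadratic form.

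The bound itself is then immediate from the displayed expression. By the triangle inequality the numerator obeys $|r_1e^{i\theta}-r_2e^{i\eta}|\le r_1+r_2$ and, using $a>r_1>0$, the denominator obeys $|a+r_1e^{i\theta}|\ge a-r_1$; dividing gives $\frac{|\lambda-\tilde{\lambda}|}{|\lambda|}\le \frac{r_1+r_2}{a-r_1}$. I would further note that equality holds at $\theta=\pi,\ \eta=0$, i.e.\ at $\lambda=a-r_1$ and $\tilde{\lambda}=a+r_2$, where both estimates are simultaneously tight, so the stated quantity is exactly the maximal radial extent of the oval.

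The \emph{main obstacle} is the precise oval characterisation: showing that the envelope of the circles $\{|z-aw|=r_2|w|\}$ is genuinely a quadratic curve requires the parameter-elimination computation and some care in handling the $|\lambda|^{2}$ normalisation. By contrast, the inequality claimed in the theorem is only a two-line triangle-inequality estimate once the normalisation $\alpha=0$ and the boundary parametrisation are in place.
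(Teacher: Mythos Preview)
Your proposal is correct and essentially parallels the paper's proof: both reduce the relative error to $|1-z|$ with $z=\tilde\lambda/\lambda$, identify the region of $z$ as the product of two circles (the Cartesian/quadratic oval), and obtain the bound via the triangle inequality. The only cosmetic difference is that the paper first expands $z-1=\dfrac{r_1^2+a(r_1e^{i\theta}+r_2e^{i\eta})+r_1r_2e^{i(\theta+\eta)}}{(a+r_1)(a-r_1)}$ and bounds it term by term, whereas you bound numerator and denominator of $\dfrac{r_1e^{i\theta}-r_2e^{i\eta}}{a+r_1e^{i\theta}}$ separately; your tightness check at $\theta=\pi$, $\eta=0$ is a nice addition not present in the paper.
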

\begin{proof}
	The ratio $\frac{\tilde{\lambda}}{\lambda}$ is inside the region given by product of two circles.
	Which is,
	\begin{align}
	\left(ae^{i \alpha}+r_2e^{i \eta}\right)\left( \frac{a}{(a+r_1)(a-r_1)} e^{-i\alpha} + \frac{r_1}{(a+r_1)(a-r_1)}e^{i \theta} \right).
	\end{align}
	The product of two circles is in general known as cartesian oval \cite{farouki2001minkowski}, which is a curve of genus 1. 
	On furthure simplifications (note that $\theta$ and $\eta$ are arbitrary and independent),
	\begin{align}
	\left(a+r_2e^{i \eta}\right)\left( \frac{a+r_1e^{i \theta}}{(a+r_1)(a-r_1)} \right).
	\end{align}
	This gives the region of $z$, thus we have,
	\begin{align}
	z-1 = \frac{r_1^2}{(a+r_1)(a-r_1)} + \frac{a(r_1e^{i \theta}+r_2e^{i \eta}) + r_1r_2e^{i (\theta+\eta)}}{(a+r_1)(a-r_1)}.
	\end{align}
	Since $a,r_1,r_2 > 0$, we have
	\begin{align}
	|z-1| \leq   \frac{r_1^2+a(r_1+r_2) + r_1r_2}{(a+r_1)(a-r_1)} = \frac{r_1+r_2}{a-r_1}.
	\end{align}
	This serves as the upper bound for relative error.
\end{proof} 
Approximating the region by circle: 
By looking at the maximum and minimum values of the product of two circles, we can approximate the quadratic oval by the circle with center
\begin{align}
\frac{1}{2}\left( \frac{a+r_2}{a-r_1}+\frac{a-r_2}{a+r_1} \right) = \frac{a^2 +r_1r_2}{(a+r_1)(a-r_1)},
\end{align}
and radius,
\begin{align}
\frac{1}{2}\left( \frac{a+r_2}{a-r_1}-\frac{a-r_2}{a+r_1} \right) = \frac{a(r_1+r_2)}{(a+r_1)(a-r_1)}.
\end{align}

So $z-1$ is inside the circle of same radius, but center is shifted to,
\begin{align}
\frac{a^2 +r_1r_2}{(a+r_1)(a-r_1)}-1 = \frac{r_1^2 + r_1r_2}{(a+r_1)(a-r_1)}. 
\end{align}

\section{An interlacing theorem for $\mathcal{O}(n)$ seperated positive definite matrices:}
Let the matrix $A$ of dimension $n$ is positive definite and well seperated in the diagonals. The diagonal entries satisfy $|a_i-a_j| = \mathcal{O}(n)$ for $i \neq j$ and $r_i = \mathcal{O}(1)$. 
Then we have the following lemma. 
\begin{lemma}\label{l1} For the positive definite well seperated matrix $A$ with seperation $\mathcal{O}(n)$,  the unit eigenvector $x$ ( $\norm{x}_2 =1$) corresponding to the eigenvalue $\lambda_j$ falling in the Gershgorin circle with center $a_j$ has entries $|x_i| = \mathcal{O}\left( \frac{1}{n}\right)$ for $j \neq i$.
\end{lemma}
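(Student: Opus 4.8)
The plan is to read the smallness of the off-diagonal eigenvector entries directly off the componentwise form of the eigenvalue equation. Writing $Ax = \lambda_j x$ and extracting the $i$-th row for an arbitrary $i \neq j$ gives
\[
(a_i - \lambda_j)\, x_i = -\sum_{k \neq i} A(i,k)\, x_k ,
\]
so that $x_i = -\bigl(\sum_{k \neq i} A(i,k)\, x_k\bigr)/(a_i - \lambda_j)$ whenever the denominator is nonzero. The whole argument then reduces to bounding the numerator from above and the denominator from below, and dividing.

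For the numerator I would use the normalization $\norm{x}_2 = 1$, which forces $|x_k| \leq 1$ for every $k$, together with the hypothesis $r_i = \mathcal{O}(1)$, to get
\[
\left| \sum_{k \neq i} A(i,k)\, x_k \right| \leq \max_k |x_k| \sum_{k \neq i} |A(i,k)| \leq r_i = \mathcal{O}(1).
\]
For the denominator I would invoke the Gershgorin localization already recalled in the paper: since the circles are disjoint, $\lambda_j$ lies in the $j$-th circle, hence $|\lambda_j - a_j| \leq r_j = \mathcal{O}(1)$. Combining this with the separation hypothesis $|a_i - a_j| = \mathcal{O}(n)$ and the reverse triangle inequality yields, for all $i \neq j$,
\[
|a_i - \lambda_j| \geq |a_i - a_j| - |a_j - \lambda_j| = \mathcal{O}(n) - \mathcal{O}(1) = \mathcal{O}(n),
\]
which in particular keeps the denominator bounded away from zero. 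Dividing the two estimates gives $|x_i| \leq r_i/|a_i - \lambda_j| = \mathcal{O}(1)/\mathcal{O}(n) = \mathcal{O}(1/n)$, as claimed.

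The step requiring the most care is the lower bound on $|a_i - \lambda_j|$, which is where the positive definiteness and the disjointness of the circles do their work: positive definiteness makes the spectrum real, so the circles and $\lambda_j$ all sit on the real axis, and disjointness guarantees that $\lambda_j$ cannot drift out of the $j$-th circle into the separating gap. I would therefore make explicit that the constant hidden in the $\mathcal{O}(n)$ separation genuinely dominates the $\mathcal{O}(1)$ radii, so that the subtraction above stays positive uniformly in $n$; this is precisely the meaning of ``well separated'' and is the only quantitative point that must be pinned down. Everything else is the routine division of the two one-line bounds.
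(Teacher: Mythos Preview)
Your proof is correct and follows essentially the same route as the paper's: both isolate $x_i$ from the $i$-th row of $Ax=\lambda_j x$, bound the off-diagonal sum by $r_i$ using $|x_k|\leq 1$, and divide by $|a_i-\lambda_j|$, which the separation forces to be of order $n$. Your absolute-value formulation is in fact slightly cleaner than the paper's sign normalization and two-case split, and you make the lower bound $|a_i-\lambda_j|\geq |a_i-a_j|-r_j$ explicit, but the underlying argument is the same.
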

\begin{proof}
We can chose the eigenvector $x$ such that its $i^{\text{th}}$ entry $x_i >0$. Then by
Gershgorin theorem,
\begin{align}\label{gershentry}
a_ix_i +r_i > \lambda_jx_i > a_ix_i -r_i. 
\end{align}

Now we have two cases, $a_i < \lambda_j$ or $a_i > \lambda_j$. 
The case when $a_i = \lambda_j$ can only happen when $i=j$.  But we are interested in the condition when $i \neq j$. 
In the first case when $a_i < \lambda_j$ from first part of \eqref{gershentry},
\begin{align}
r_i < (\lambda_j-a_i)x_i, \\
x_i < \frac{r_i}{\lambda_j-a_i} \label{bound1}.
\end{align}

In the second case when $a_i > \lambda_j$,

\begin{align}
r_i < (a_i-\lambda_j)x_i, \\
x_i < \frac{r_i}{a_i-\lambda_j} \label{bound2}.
\end{align}

From the equations \eqref{bound1} and \eqref{bound2} we have,

\begin{align}
x_i = \mathcal{O}\left( \frac{1}{n} \right).
\end{align}
\end{proof}

Let the matrix $A$ is perturbed by matrix $S$ having $S(i,j) = \mathcal{O}\left( \frac{1}{n}\right)$ and
$S(i,i) = \mathcal{O}(1) > 0$. If $\lambda_1 \leq \lambda_2 \leq \cdots \leq \lambda_n$ are the eigenvalues of $A$ and  $\tilde{\lambda_1} \leq \tilde{\lambda_2} \leq \cdots \leq \tilde{\lambda_n}$ are eigenvalues of $A+tS$ for $t >0$ and $t = \mathcal{O}(1)$. we have the following interlacing theorem, 

\begin{theorem}
 The eigenvalues of $A$ and $A+tS$ interlace for $t > 0 $ and $t = \mathcal{O}(1)$. (i.e. $\lambda_1 \leq \tilde{\lambda_1} \leq \lambda_2 \leq \tilde{\lambda_2} \cdots \leq \lambda_n \leq \tilde{\lambda_n}$)
\end{theorem}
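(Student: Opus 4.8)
The plan is to realize the interlacing as the combination of two ingredients: a \emph{separation} estimate coming from the Gershgorin theorem, which confines each perturbed eigenvalue to a disk far from its neighbours, and a \emph{monotonicity} estimate coming from Lemma~\ref{l1}, which forces every eigenvalue to move upward along the perturbation. Throughout I order the diagonals as $a_1 < a_2 < \cdots < a_n$; since $|a_i-a_j| = \mathcal{O}(n)$ these are distinct with gaps of order $n$, so by the Gershgorin theorem together with the well-separatedness the $k$-th smallest eigenvalue $\lambda_k$ of $A$ is the unique eigenvalue in the disk centred at $a_k$, giving $|\lambda_k - a_k| \le r_k = \mathcal{O}(1)$.

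First I would record that $A+tS$ inherits the hypotheses uniformly for $t \in [0,t_0]$ with $t_0 = \mathcal{O}(1)$: it is symmetric, its diagonal entries $a_k + tS(k,k)$ still have gaps $|a_i-a_j| - t|S(i,i)-S(j,j)| = \mathcal{O}(n)$ and hence the same ordering, and its Gershgorin radii $r_k + t\sum_{l\neq k}|S(k,l)| = \mathcal{O}(1) + t(n-1)\mathcal{O}(1/n) = \mathcal{O}(1)$ remain bounded. Consequently Lemma~\ref{l1} applies to $A+tS$ for every such $t$, and since its disk centres are ordered and disjoint, its $k$-th smallest eigenvalue $\tilde\lambda_k$ is the one in the disk centred at $a_k + tS(k,k)$ of radius $\mathcal{O}(1)$.

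The upper inequality $\tilde\lambda_k \le \lambda_{k+1}$ then follows at once from these Gershgorin bounds: $\tilde\lambda_k \le a_k + tS(k,k) + \mathcal{O}(1) = a_k + \mathcal{O}(1)$ while $\lambda_{k+1} \ge a_{k+1} - r_{k+1} = a_{k+1} - \mathcal{O}(1)$, and since $a_{k+1}-a_k = \mathcal{O}(n)$ the latter dominates once $n$ is large. For the lower inequality $\lambda_k \le \tilde\lambda_k$ I would pass to the smooth eigenvalue path $\lambda_k(t)$ of $A+tS$. Because the eigenvalues stay simple (the disks never merge), $\lambda_k(t)$ is differentiable with $\lambda_k'(t) = x_k(t)^T S\,x_k(t)$, where $x_k(t)$ is the corresponding unit eigenvector. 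By Lemma~\ref{l1} this eigenvector is concentrated on its $k$-th coordinate, $|x_k(t)_k|^2 = 1 - \mathcal{O}(1/n)$ and $|x_k(t)_i| = \mathcal{O}(1/n)$ for $i \neq k$, so on expanding the quadratic form the leading term is $|x_k(t)_k|^2 S(k,k) = S(k,k) + \mathcal{O}(1/n)$ while each remaining group of terms (the cross terms linking index $k$ to $i\neq k$, and the whole block with $i,j\neq k$) sums to $\mathcal{O}(1/n)$. Since $S(k,k) \ge c > 0$, this yields $\lambda_k'(t) > 0$ uniformly in $t$, whence $\lambda_k(t)$ is strictly increasing and $\tilde\lambda_k = \lambda_k(t_0) > \lambda_k(0) = \lambda_k$.

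The main obstacle is this lower bound. A crude norm estimate such as Weyl's inequality is useless, because $S$ need not be positive definite (its Gershgorin radius is of the same order as its diagonal), and a direct Rayleigh-quotient comparison of $\tilde\lambda_k$ with $\lambda_k$ also fails, since the factor $a_k = \mathcal{O}(n)$ inflates the eigenvector-concentration error to $\mathcal{O}(1)$, precisely the scale one must resolve. Differentiating in $t$ is what removes the large $A$-part and leaves only the bounded entries of $S$; the real work is the careful bookkeeping showing that the off-diagonal contributions to $x_k(t)^T S x_k(t)$ are genuinely $\mathcal{O}(1/n)$ and that this holds uniformly in $t$, so that positivity of the derivative persists along the entire path.
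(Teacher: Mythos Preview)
Your proposal is correct and uses the same two ingredients as the paper---Gershgorin separation for the upper inequality $\tilde\lambda_k\le\lambda_{k+1}$ and eigenvector concentration (Lemma~\ref{l1}) for the lower inequality $\lambda_k\le\tilde\lambda_k$---but it organizes the lower-inequality argument differently. The paper works directly with the pair of eigenvectors $x$ of $A$ and $y$ of $A+tS$, forms the bilinear identity $(\tilde\lambda_i-\lambda_i)\,x^Ty = t\,x^TSy$, and observes via Lemma~\ref{l1} that both $x^Ty$ and $x^TSy$ carry the sign of $x_iy_i$, whence $\tilde\lambda_i-\lambda_i>0$ in one step. You instead differentiate along the path, using $\lambda_k'(t)=x_k(t)^TS\,x_k(t)$, show this is positive by the same concentration estimate, and integrate; this is essentially the infinitesimal version of the paper's identity (your quadratic form is their bilinear form with $y=x$). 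Your route is more careful about uniformity in $t$ and sidesteps any concern about $x^Ty$ being small, at the modest cost of invoking differentiability of simple eigenvalues; the paper's route is a one-line static identity but leaves implicit the fact that Lemma~\ref{l1} also applies to $A+tS$, which you verify explicitly.
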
 

\begin{proof}
Let $x$ and $y$ be the eigenvectors correspong to the Gershgorin circle and its perturbation,
$Ax = \lambda_i x$ and $(A+tS)y  = \tilde{\lambda_i} y$. 
Consider,

\begin{align}
x^T ((A+tS)-A)y = t x^TSy \\
(\tilde{\lambda_i}-\lambda_i) x^Ty = t x^TSy \label{diff} 
\end{align} 
Now by using the lemma \ref{l1}, the sign of $x^Ty$ is sign of $x_iy_i$. 
And the sign of $x^TSy$ is also the sign of $x_iy_i$. Thus we get 
$\lambda - \tilde{\lambda} > 0$ for all such eigenvalue pairs.
Also from the well seperation we have $a_i + \mathcal{O}(1) < a_{i+1} - r_{i+1}$.
Thus we get $ \tilde{\lambda_i} < \lambda_{i+1}$.  	 
\end{proof}	

\section{On bounding the condition number of eigenvector matrix}
Let the matrix $A$ of dimension $n$ and diagonal entries satisfy $|a_i-a_j| = \mathcal{O}(n^2)$ for $i \neq j$ and $r_i = \mathcal{O}(1)$. Note that here we dont have any condition on symmetry of definiteness. 
Then we have the following lemma. 
\begin{lemma}\label{l2} For the well seperated matrix $A$ with seperation $\mathcal{O}(n^2)$,  the unit eigenvector $x$ ( $\norm{x}_2 =1$) corresponding to the eigenvalue $\lambda$ falling in the Gershgorin circle with center $a_i$ has entries $|x_j| = \mathcal{O}\left( \frac{1}{n^2}\right)$ for $j \neq i$.
\end{lemma}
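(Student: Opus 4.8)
The plan is to follow the template of Lemma \ref{l1}, now exploiting the stronger $\mathcal{O}(n^2)$ separation to extract one extra power of $n$ in the decay of the off-center eigenvector entries. First I would write the eigenvalue equation $Ax = \lambda x$ row by row and isolate, for each fixed $j \neq i$, the off-center component from the $j$-th row:
\begin{align}
(\lambda - a_j)\,x_j = \sum_{k \neq j} A(j,k)\,x_k .
\end{align}
Since $x$ is normalized so that $\norm{x}_2 = 1$, each coordinate obeys $|x_k| \leq 1$, hence the right-hand side is bounded in modulus by the $j$-th row radius $\sum_{k \neq j}|A(j,k)| = r_j = \mathcal{O}(1)$.

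The second step is to bound $|\lambda - a_j|$ from below. Because $\lambda$ sits in the Gershgorin circle centered at $a_i$ with radius $r_i$, we have $|\lambda - a_i| \leq r_i = \mathcal{O}(1)$; combining this with the separation hypothesis $|a_i - a_j| = \mathcal{O}(n^2)$ through the triangle inequality gives
\begin{align}
|\lambda - a_j| \;\geq\; |a_i - a_j| - |\lambda - a_i| \;=\; \mathcal{O}(n^2) - \mathcal{O}(1) \;=\; \mathcal{O}(n^2).
\end{align}
Dividing the modulus of the row identity by this lower bound then yields $|x_j| \leq r_j/|\lambda - a_j| = \mathcal{O}(1)/\mathcal{O}(n^2) = \mathcal{O}(1/n^2)$, uniformly over all $j \neq i$, which is the assertion of the lemma.

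Structurally the proof is routine once Lemma \ref{l1} is available, so I do not expect a genuine obstacle; the one point worth checking is that the crude bound $\sum_{k \neq j}|A(j,k)|\,|x_k| \leq r_j$ is not too wasteful. It is in fact of the right order: the dominant summand is $k = i$, where $x_i = \mathcal{O}(1)$ is the principal coordinate, whereas every other term already carries a factor $\mathcal{O}(1/n^2)$ by the estimate being proved, so the uniform bound $|x_k|\leq 1$ suffices and no bootstrapping is required. The only care needed is to confirm that subtracting the $\mathcal{O}(1)$ radius does not erode the $\mathcal{O}(n^2)$ gap, which it plainly does not.
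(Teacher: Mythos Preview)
Your proposal is correct and follows essentially the same route as the paper: write the $j$-th row of $Ax=\lambda x$, bound the off-diagonal sum by $r_j=\mathcal{O}(1)$ via $|x_k|\le 1$, and divide by the gap $|\lambda-a_j|=\mathcal{O}(n^2)$. The paper's argument is the same two-line estimate (with the roles of the indices $i$ and $j$ swapped), only stated more tersely; your version makes the use of $\norm{x}_2=1$ and of the triangle inequality for the lower bound on $|\lambda-a_j|$ explicit, which the paper leaves implicit.
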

\begin{proof}
From the eigenvector relation $Ax = \lambda x$,

\begin{align}
a_ix_i + \sum_{k \neq i} {A(i,k)}x_k &= \lambda_j x_i \\
 |\lambda_j-a_i||x_i| &\leq r_i.  
\end{align}
This gives $|x_i| = \mathcal{O}\left( \frac{1}{n^2}\right)$ for $i \neq j$. 
\end{proof}	
Let us consider $|x_i| < \frac{k}{n^2}$, for $i \neq j$, then we have $|x_j| > 1- k^2\frac{n-1}{n^4} > 1-\frac{k^2}{n^3}$. 
Now we look at the matrix $H = X^TX$ where $X$ is eigenvector matrix of $A$.
\begin{theorem}
	The condition number of eigenvector matrix $X$ of well seperated matrix $A$ satisfies $\kappa(X) \leq \frac{n^3+3kn^2+k^2}{n^3-3kn^2-3k^2}$ 
\end{theorem}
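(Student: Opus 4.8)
The plan is to read off the condition number of $X$ from the Gram matrix $H = X^T X$ introduced above, exploiting that Lemma~\ref{l2} makes the columns of $X$ almost orthonormal. The singular values of $X$ are the square roots of the eigenvalues of $H$, so $\kappa(X) = \sqrt{\mu_{\max}(H)/\mu_{\min}(H)}$; since this ratio is at least $1$, we have in particular $\kappa(X) \le \mu_{\max}(H)/\mu_{\min}(H)$, and it therefore suffices to trap the whole spectrum of the symmetric matrix $H$ inside a short interval centered at $1$. This also explains why the final bound is not itself a square root.

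First I would record the diagonal of $H$. Writing $x^{(p)}$ for the $p$-th column of $X$, each $x^{(p)}$ is a unit eigenvector, so $H(p,p) = \langle x^{(p)}, x^{(p)} \rangle = 1$ exactly and $H = I + E$ with $E$ hollow (zero diagonal). Next I would bound the off-diagonal entries $H(p,q) = \langle x^{(p)}, x^{(q)} \rangle$ for $p \neq q$. I would split this inner product into the two dominant-times-subdominant cross terms (at the indices $p$ and $q$, where one factor is close to $1$ and the other is below $k/n^2$ by Lemma~\ref{l2}) and the remaining $n-2$ subdominant-times-subdominant terms (each below $k^2/n^4$). Inserting the estimate $|x^{(p)}_p|^2 > 1 - k^2/n^3$ recorded just before the theorem keeps the dominant factors controlled, and this is what eventually produces the common factor $n^3 - k^2$ in the answer.

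I would then apply Gershgorin's theorem to $H$. Every eigenvalue $\mu$ satisfies $|\mu - 1| \le R$, where $R := \max_p \sum_{q \neq p} |H(p,q)|$ is obtained by summing the per-entry bound over the $n-1$ off-diagonal positions of a row. Consequently $\mu_{\max}(H) \le 1 + R$ and $\mu_{\min}(H) \ge 1 - R$, so $\kappa(X) \le (1+R)/(1-R)$. Clearing denominators so that $R$ is written over $n^3 - k^2$ then collapses $(1+R)/(1-R)$ to the claimed expression $\frac{n^3 + 3kn^2 + k^2}{n^3 - 3kn^2 - 3k^2}$.

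The step I expect to be the main obstacle is the bookkeeping in the off-diagonal sum: it must be done sharply enough that, after summing over the row and simplifying, the numerator and denominator land exactly on the stated coefficients rather than merely on their leading orders. The only genuinely substantive point, as opposed to routine algebra, is to confirm that the denominator $n^3 - 3kn^2 - 3k^2$ is positive, i.e. that $R < 1$; this is precisely where the $\mathcal{O}(n^2)$ separation hypothesis is used, since it forces the off-diagonal eigenvector entries down to $\mathcal{O}(1/n^2)$ and hence $R \to 0$, guaranteeing both that $X$ is invertible and that $\kappa(X)$ is finite.
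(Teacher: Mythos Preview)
Your plan is essentially the paper's own argument: form $H = X^TX$, bound its entries, apply Gershgorin to $H$, and read off $\kappa(X)$ from the ratio of extreme eigenvalues of $H$ (dropping the square root since the ratio exceeds $1$). The off-diagonal estimate you describe, $|H(p,q)| \le 2k/n^2 + k^2/n^3$, and the resulting row-sum bound $R \le 3k/n$ are exactly what the paper uses.

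There is one discrepancy worth flagging. You correctly observe that with unit-norm columns $H(p,p)=1$ exactly, so $H = I + E$ with $E$ hollow; carrying this through gives $\kappa(X) \le (1+R)/(1-R) \le (n+3k)/(n-3k)$, which is \emph{strictly sharper} than the stated bound and therefore still proves the theorem. Your expectation that the algebra will ``collapse to the claimed expression $\frac{n^3+3kn^2+k^2}{n^3-3kn^2-3k^2}$'' is not right: the extra $k^2$ and $3k^2$ terms in the paper's formula do not come from the off-diagonal bookkeeping or from the lower bound $|x^{(p)}_p|^2 > 1 - k^2/n^3$ (a lower bound on the dominant entry cannot tighten an \emph{upper} bound on $|H(p,q)|$, and no factor $n^3-k^2$ appears there). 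They arise because the paper, instead of using $H(i,i)=1$, bounds the diagonal componentwise as $1 - 3k^2/n^3 \le H(i,i) \le 1 + k^2/n^3$ and then combines these with the radius $3k/n$. So your route is the same in spirit and actually cleaner; just do not expect the arithmetic to reproduce the paper's exact constants.
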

\begin{proof}
	The diagonal entry of matrix $H$ satisfy 
	\begin{align*}
	H(i,i)  &= x^Tx \\
	H(i,i)  &= \mathcal{O}(1) + \mathcal{O}\left(\frac{1}{n^3} \right) \\
    \end{align*}
    So we have $(1-\frac{k^2}{n^3})^2 - \frac{k^2}{n^3} \leq H(i,i) \leq 1 + \frac{k^2}{n^3}$.
    Further,  $1 - \frac{3 k^2}{n^3} \leq H(i,i) \leq 1 + \frac{k^2}{n^3}$.
    The entry $H(i,j)$ for $i \neq j$ is bounded by $H(i,j) \leq \frac{2k}{n^2} + \frac{k^2}{n^3}$.
    So we have Gershgorin radius bounded by $\frac{3k}{n}$.
    So we have the condition number $\kappa(X) \leq \frac{1 + \frac{k^2}{n^3} + \frac{3k}{n}}{1 - \frac{3k^2}{n^3}-\frac{3k}{n}} = \frac{n^3+3kn^2+k^2}{n^3-3kn^2-3k^2}$.
    
\end{proof}
\begin{corollary}
When the $\mathcal{O}(n^2)$ seperated matrix $A$ is perturbed by $\Delta$, the eigenvalues $\lambda$ and $\tilde{\lambda}$ of $A$ and $A+\Delta$ satisfy
$|\lambda - \tilde{\lambda}| \leq \frac{n^3+3kn^2+k^2}{n^3-3kn^2-3k^2} \norm{\Delta}$.  
\end{corollary}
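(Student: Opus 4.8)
The plan is to derive this as an immediate consequence of the Bauer--Fike theorem, feeding in the condition-number estimate established in the preceding theorem. First I would recall the absolute (unnormalized) form of Bauer--Fike: if $A$ is diagonalizable as $A = X\Lambda X^{-1}$, then for every eigenvalue $\tilde{\lambda}$ of the perturbed matrix $A+\Delta$ there is an eigenvalue $\lambda$ of $A$ with $|\lambda - \tilde{\lambda}| \leq \kappa(X)\norm{\Delta}$, where $\kappa(X) = \norm{X}\norm{X^{-1}}$. This is the companion of the relative Bauer--Fike bound quoted in the introduction, and it is exactly the shape of the inequality the corollary asserts.

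Before invoking it, I would verify that $A$ is genuinely diagonalizable, so that the eigenvector matrix $X$ exists and the hypothesis of Bauer--Fike is met. This follows from the separation assumption used throughout this section: when $|a_i - a_j| = \mathcal{O}(n^2)$ while $r_i = \mathcal{O}(1)$, the Gershgorin discs are pairwise disjoint for large $n$, so by the Gershgorin circle theorem each disc contains exactly one eigenvalue. Hence $A$ has $n$ distinct eigenvalues and is diagonalizable, its columns being precisely the unit eigenvectors analysed in Lemma \ref{l2}.

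With diagonalizability secured, I would substitute the bound proved in the previous theorem, $\kappa(X) \leq \frac{n^3+3kn^2+k^2}{n^3-3kn^2-3k^2}$, directly into the Bauer--Fike inequality. Since the pair $(\lambda,\tilde{\lambda})$ in the corollary is exactly a nearest-eigenvalue pair in the sense of the Bauer--Fike minimum, this yields $|\lambda - \tilde{\lambda}| \leq \frac{n^3+3kn^2+k^2}{n^3-3kn^2-3k^2}\norm{\Delta}$, which is the claim.

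The main obstacle is not a new estimate but a matching of quantities: I must make sure that the $\kappa(X)$ controlled in the previous theorem (via applying Gershgorin to $H = X^{T}X$) is the same spectral condition number $\norm{X}_2\norm{X^{-1}}_2$ that Bauer--Fike requires. I would make this identification explicit, noting that the squared singular values of $X$ are the eigenvalues of $H$, so that the Gershgorin bounds on $H$ translate into a bound on $\lambda_{\max}(H)/\lambda_{\min}(H)$ and hence on $\kappa_2(X)$; I would also check that the norm in Bauer--Fike is taken consistently with the definition of $\kappa$ used here. Once this book-keeping is pinned down, the corollary reduces to a single substitution.
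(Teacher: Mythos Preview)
Your proposal is correct and matches the paper's own argument: the paper states only that the corollary follows from the Bauer--Fike theorem, i.e.\ by substituting the condition-number bound of the preceding theorem into $|\lambda-\tilde{\lambda}|\le\kappa(X)\norm{\Delta}$. Your added checks (diagonalizability from disjoint Gershgorin discs, and identifying the eigenvalues of $H=X^TX$ with the squared singular values of $X$ so that the Gershgorin bound on $H$ controls $\kappa_2(X)$) only make explicit what the paper leaves implicit.
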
	
The proof of the corollary follows from Bauer-Fike theorem.	
\section{Numerical result}
\subsection{On the relative error bound :} 
The example shown in Figure 1 is a matrix with strictly positive entries on the diagonal, random Gaussian symmetric entries in non diagonal. It is taken care that the circles are disjoint and away from the origin. It can be noted that, in Figure \ref{tc}  the center of the region overlaps with the relative error. In Figure \ref{btc} a upper Hessenberg matrix with positive entries is taken with disjoint Gershgorin circles. The center of the regions themselves form an upperbound to the relative error in this case. 
\subsection{On the magnitude of eigenvector entries :}
To test with the matrices arising in practial scenario, matrices "bfw62a" and "pde225" are considered from matrix market \cite{mm}. The matrix "bfw62a" arises in bounded finite dielectric waveguide problem, where as "pde225" arises in  discretization of a partial differential equation. Both these matrices have large and small diagonal entries but are not well seperated. The magnitude of eigenvector entries are plotted and large eigenvector entries shown to follow the trend of $\frac{1}{|A(i,i)-\lambda|}$ (as in Lemma \ref{l2}) in Figure \ref{vwg} and Figure \ref{vpde}. The relative error bound holds for these matrices and error seems to follow the trend of the bound in Figure \ref{ewg} and Figure \ref{epde}.     
\subsection{Estimating Perron vector using power method :}
From Perron Frobenius theorem \cite{pf}, for a matrix with positive entries there exists an eigenvector with all positive entries. The observed trend of eigenvector magnitude is used for estimating the perron vector of a matrix $A$ with positive entries. The vector $x$ with entry $x_i = 1/|(A(i,i)-K)|$ with some large value $K$ is given as a starting vector for the power method. For symmetric matrix with diagonals chosen randomly from one to $n$, this starting vector saves on an average three iteration for a given error. Fig \ref{perr} shows the fall in the logarithmic error with a random starting vector and vector $x$ chosen as mentioned above.    
\begin{figure}
	\subfigure [relative error and bounds for symmetric matrix of dimension 100 with half the Gerschgorian radius\label{tc}] {\includegraphics[width = 6 cm]{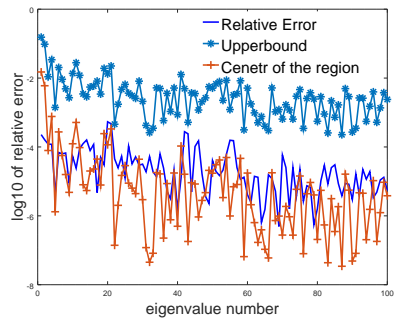}}
		\hspace{0.5cm}
	\subfigure [Logarithmic relative error, bound and the center of the region for  upper Hessenberg matrix of dimension 100 with half the Gerschgorian radius\label{btc}]{	\includegraphics[width =6 cm]{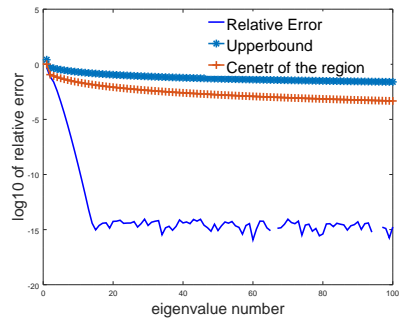}}
	
	\subfigure[Relative error and bounds for matrix arising in waveguide problem \label{ewg}]{	\includegraphics[width = 6 cm]{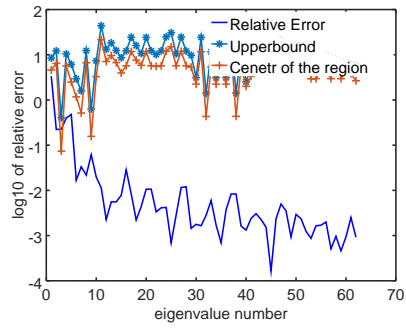}}
	\hspace{0.5cm}
	\subfigure [Relative error and bounds for matrix arising in partial differential equation \label{epde}]{\includegraphics[width = 6 cm]{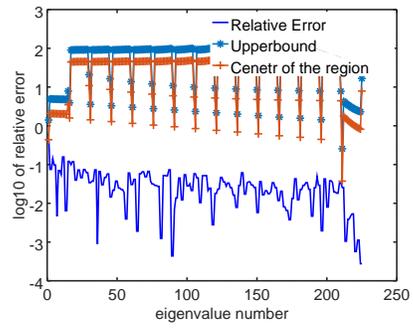}} 
	\caption{Relative error bounds}
\end{figure}

\begin{figure}
		\subfigure [absolute values of eigenvector entries corresponding to largest eigenvalue for matrix bfw62a \label{vwg}]{\includegraphics[width = 6 cm]{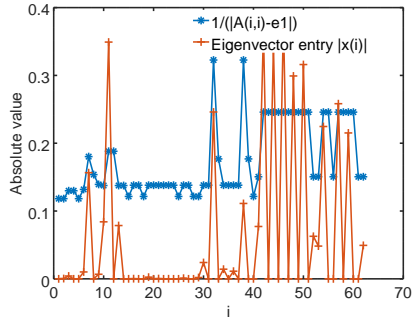}}
        \hspace{0.5cm} 
         \subfigure [absolute values of eigenvector entries corresponding to largest eigenvalue for matrix pde225 \label{vpde}]{\includegraphics[width = 6 cm]{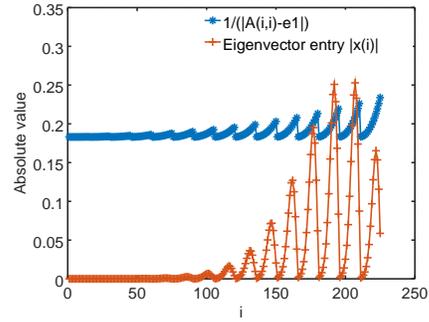}}
         \caption{Absolute value of eigenvector entries}
\end{figure}
\begin{figure}
	\centering
	\includegraphics[width = 6 cm]{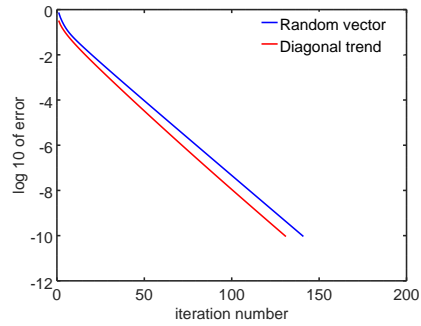}
	\caption{Error in estimating perron vector using power method for a $100$ dimensional matrix with linear diagonal entries using random starting vector and the vector based on diagonal entries.}\label{perr}
\end{figure}

\clearpage
\section{Conclusion}
 Upper bound for the relative error for non diagonal perturbations are given in terms of Gershgorin circle parameters, Numerical experiments show the goodness in the approximation of the region of relative error (quadratic oval) by a circle. The well seperated matrices have well conditioned eigenvector matrices when the seperation is $\mathcal{O}(n^2)$. The eigenvalues of positive definite $\mathcal{O}(n)$ well seperated matrices interlace with eigenvalues of perturbed matrices under small perturbations. Numerical results show that even when the matrix is not so well seperated, magnitude of eigenvector entries are related to diagonal entries. Further this diagonal trend can be exploited in better convergence of power method in estimating Perron vector.


\end{document}